\newtheorem{theorem}{Theorem}
\newtheorem{proposition}{Proposition}
\newtheorem{remark}{Remark}
\definecolor{coolblue}{RGB}{0,114,189}
\DeclareMathOperator{\Ann}{Ann}
\DeclareMathOperator{\rank}{rank}
\newenvironment{proof}{\paragraph*{Proof:}}{\hfill$\square$}
\begin{document}
\begin{frontmatter}

\title{Optimal Control of Hybrid Systems with Submersive Resets\thanksref{footnoteinfo}} 

\thanks[footnoteinfo]{This work was funded by AFOSR Award No. MURI FA9550-23-1-0400.}

\author[First]{William A. Clark} 
\author[Second]{Maria Oprea}
\author[Third]{Aden Shaw}

\address[First]{Department of Mathematics, Ohio University, Athens, OH 45701 USA (e-mail: clarkw3@ohio.edu)}
\address[Second]{Center for Applied Mathematics, Cornell University, Ithaca, NY 14853 USA (e-mail: mao237@cornell.edu)}
\address[Third]{Department of Mathematics, Rose-Hulman Institute of Technology, Terre Haute, IN 47803 USA, (e-mail: shawap@rose-hulman.edu)}
\begin{abstract}                
Hybrid dynamical systems are systems which posses both continuous and discrete transitions. Assuming that the discrete transitions (resets) occur a finite number of times, the optimal control problem can be solved by gluing together the optimal arcs from the underlying continuous problem via the ``Hamilton jump conditions.'' In most cases, it is assumed that the reset is a diffeomorphism (onto its image) and the corresponding Hamilton jump condition admits a unique solution. However, in many applications, the reset results in a drop in dimension and the corresponding Hamilton jump condition admits zero/infinitely many solutions. A geometric interpretation of this issue is explored in the case where the reset is a submersion (onto its image). Optimality conditions are presented for this type of reset along with an accompanying numerical example.
\end{abstract}

\begin{keyword}
Control of hybrid systems, Control of constrained systems, Control design for hybrid systems, Lagrangian and Hamiltonian systems 
\end{keyword}

\end{frontmatter}

\section{Introduction}
Hybrid dynamical systems contain both continuous and discrete evolution and are used to study a wide-range of phenomena, \cite{HDS_magazine}. A specific class of hybrid systems are those whose discrete transition is triggered by an event in the continuous dynamics. Such dynamics will be described via
\begin{equation}\label{eq:sHDS}
    \mathcal{H} : 
    \begin{cases}
        \dot{x} = f(x), & x \not\in \mathcal{S}, \\
        x^+ = \Delta(x^-), & x \in \mathcal{S}.
    \end{cases}
\end{equation}
Here, $f \colon M \to TM$ is a smooth vector-field which encodes the continuous evolution, $\mathcal{S} \subset M$ is an embedded, codimension 1, submanifold which dictates where the discrete events occur and is called the \textit{guard}, and $\Delta \colon \mathcal{S} \to M$ is the discrete event called the \textit{reset}. A trajectory following \eqref{eq:sHDS} is called a hybrid arc. A schematic of these dynamics where the reset fails to be injective is illustrated in Fig. \ref{fig:Dynamics Schematic}.
\begin{figure}[!ht]
    \centering
    \includegraphics[width = .95\linewidth]{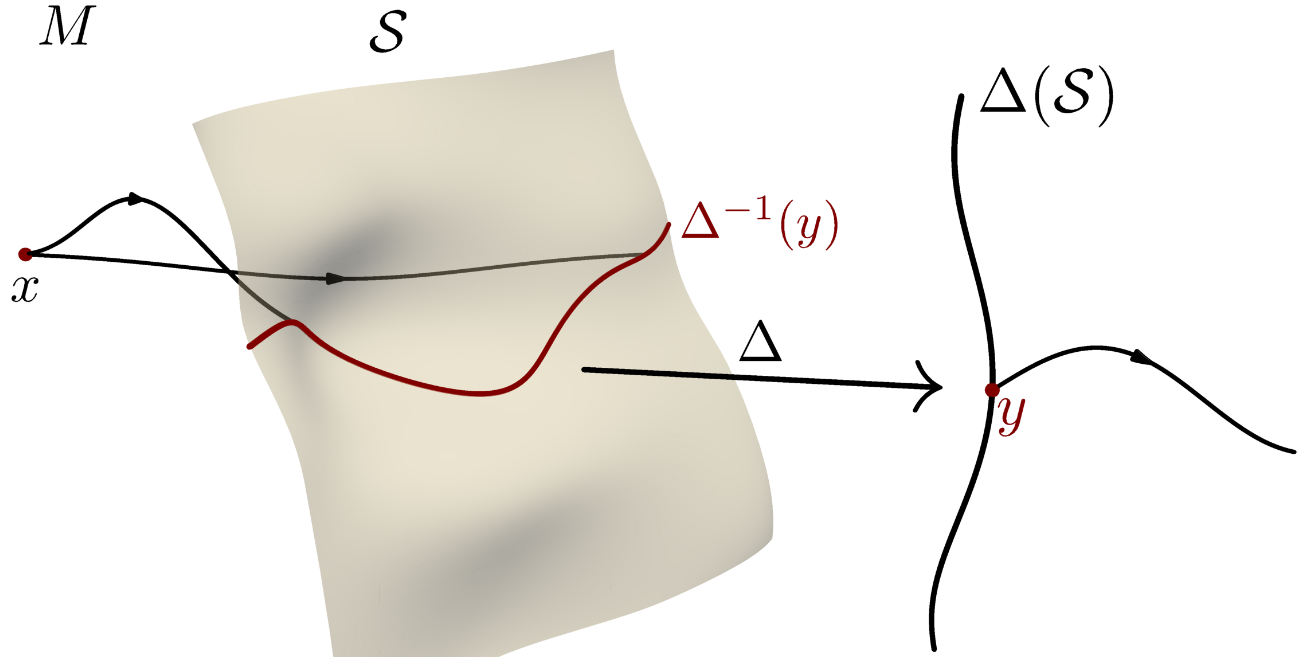}
    \caption{Schematic of the dynamics described by \eqref{eq:sHDS}, where $M$ is given by the ambient space, $S$ is a $2$ dimensional surface, and its image under $\Delta$ is a curve. Two different controlled trajectories emanating from the same initial point may be mapped to the same state at the reset.}
    \label{fig:Dynamics Schematic}
\end{figure}

Controls can be implemented in the system \eqref{eq:sHDS} in three principal ways: 
\begin{enumerate}
    \item through the continuous dynamics, $f \colon M \times \mathcal{U} \to TM$,
    \item through the event location, $\mathcal{S}_u \subset M$ for $u \in \mathcal{U}$,
    \item through the reset map, $\Delta \colon \mathcal{S} \times \mathcal{U} \to M$.
\end{enumerate}
For this work, we will only consider hybrid systems controlled through their continuous dynamics; introducing controls through the reset have been studied in, e.g. \cite{pakniyat_2023}. Hybrid control systems will henceforth have the form
\begin{equation}\label{eq:cHDS}
    \mathcal{HC} : 
    \begin{cases}
        \dot{x} = f(x, u), & x \not\in \mathcal{S}, \\
        x^+ = \Delta(x^-), & x \in \mathcal{S},
    \end{cases}
\end{equation}
where $f \colon M\times \mathcal{U} \to TM$ is the controlled vector-field which is assumed to be smooth in both variables.

The problem that we consider is the minimization of
\begin{equation}\label{eq:minimize}
    \mathcal{J}(u; t_0, x_0) 
    = \int_{t_0}^T \, \ell(x(s), u(s))\, ds + \varphi(x(T)),
\end{equation}
where $\ell \colon M \times \mathcal{U} \to \mathbb{R}$ and $\varphi \colon N \subset M \to \mathbb{R}$ are smooth, and subject dynamics are given by \eqref{eq:cHDS} with the conditions $x(t_0) = x_0$ and $x(T) \in N$ for some target manifold.

Necessary conditions for a hybrid arc with finitely many resets to be a weak minimum of \eqref{eq:minimize} are given by the hybrid maximum principle (HMP), \cite{Böhme2017,DMITRUK2008964,pakniyat_2023}. If $H \colon T^*M \to \mathbb{R}$ is the optimal Hamiltonian prescribed by the HMP, then at resets the co-states jump according to:
\begin{equation}\label{eq:costate_jump}
    \begin{split}
        p^+ \circ \Delta_* - p^- & \in \Ann(T\mathcal{S}), \\
        H^+ - H^- & = 0,
    \end{split}
\end{equation}
where $\Delta_* \colon T\mathcal{S} \to TM$ is the push-forward and $\Ann(T\mathcal{S})$ is the annihilator.
Unfortunately, it is not known a priori that \eqref{eq:costate_jump} admits a unique solution for $p^+$. In the case where $\Delta_*$ has full rank (the reset is a local diffeomorphism onto its image), the first condition in \eqref{eq:costate_jump} always admits solutions (although the second condition may not).

In the case where $\Delta$ causes a drop in dimension, i.e. $\dim(\Delta(\mathcal{S})) < \dim(\mathcal{S})$, its push-forward, $\Delta_*$, no longer has full rank. Resets of this type are not rare; an example of such a case is locomotion with plastic impacts, \cite{locomotion,passive_walking}. When this happens, \eqref{eq:costate_jump} generally admits zero or infinitely many solutions as the first condition in \eqref{eq:costate_jump} becomes over-determined. The contribution of this work is to present a geometric solution on how to resolve this issue. This culminates in Algorithm \ref{alg:alg1} which offers a way to systematically find a correct value for the co-state jumps.

\section{Optimal Control of Hybrid Systems}
This work is concerned with hybrid control systems of the form \eqref{eq:cHDS}. To be explicit on the assumptions of this system, they are
described by the tuple $\mathcal{HC} = (M, \mathcal{U}, f, \mathcal{S}, \Delta)$ where
\begin{enumerate}
    \item $M$ is a finite-dimensional manifold,
    \item $\mathcal{U} \subset \mathbb{R}^m$ is a closed subset,
    \item $f \colon M \times \mathcal{U} \to TM$ is a smooth controlled vector field,
    \item $\mathcal{S}\subset M$ is an embedded codimension 1 submanifold, and
    \item $\Delta \colon \mathcal{S} \to M$ is a smooth map such that $\Delta(\mathcal{S})$ is embedded and the map is a submersion onto its image with constant rank $r$.
\end{enumerate}
The set $\mathcal{U}$ is called the control set and a piece-wise smooth curve $u \colon \mathbb{R}^+ \to \mathcal{U}$ is an admissible control. The resulting evolution for this control is specified by \eqref{eq:cHDS}.

\begin{remark}
    The key ingredient for the optimal control problem at resets is not the guard, rather the fibers of $\Delta$. This is the reason for requiring $\Delta$ to have constant rank. By the submersion theorem, cf. Theorem 5.12 in \cite{lee}, each fiber $\Delta^{-1}(y)$ is a properly embedded submanifold of codimension $r$ in $\mathcal{S}$, equivalently codimension $r+1$ in $M$.
\end{remark}

For a given initial condition, $x_0 \in M$, the optimal control problem is to minimize the objective \eqref{eq:minimize} over all admissible controls such that the controlled trajectory satisfies the terminal constraint, $x(T)\in N$.
Necessary conditions for a solution to this control problem are given by the hybrid Pontryagin maximum principle is which fundamentally a concatenation of the classical maximum principle. We let the control Hamiltonian be (assuming the extremal is regular) 
\begin{gather*}
    \mathcal{H} \colon T^*M \times \mathcal{U} \to \mathbb{R}, \\
    \mathcal{H}(x, p; u) \coloneqq \langle p, f(x, u)\rangle + \ell(x, u),
\end{gather*}
where $\langle\cdot,\cdot\rangle$ denotes the pairing between covectors and vectors. The optimized Hamiltonian is the function
\begin{gather*}
    H \colon T^*M \to \mathbb{R} ,\quad
    H(x, p) = \min_{u \in \mathcal{U}} \, \mathcal{H}(x, p; u).
\end{gather*}
The classical maximum principle states that if a controlled trajectory $(\bar{x},\bar{u})$ is optimal, then there exists a lifted curve $(\bar{x},\bar{p}) \colon [t_0,T]\to T^*M$ such that 
\begin{equation*}
    \dot{\bar{x}} = \frac{\partial H}{\partial p}, \quad \dot{\bar{p}} = -\frac{\partial H}{\partial x},
\end{equation*}
subject to the transversality condition
\begin{equation*}
    \bar{x}(T) \in N, \quad i^*\bar{p}(T) = d\varphi_{\bar{x}(T)},
\end{equation*}
where $i \colon N \hookrightarrow M$ is the inclusion.

For the hybrid optimal control problem, an optimal trajectory satisfies the same Hamiltonian evolution between resets and the same transversality condition as the classical version. However, at resets, the state jumps via $\Delta$ and the co-state jumps by a ``Hamiltonian jump condition,'' $\tilde{\Delta} \colon T^*M|_\mathcal{S} \to T^*M$, cf. \cite{pakniyat_2023}. This is made precise in the following theorem.

\begin{theorem}[Hybrid maximum principle]\label{thm:HMP}
    Let $(\bar{x},\bar{u})$ be a controlled hybrid trajectory over the interval $[t_0,T]$ such that there exists finitely many resets and the reset times are uniformly separated. If this trajectory is optimal, then there exists a lifted hybrid curve $(\bar{x},\bar{p}):[t_0,T]\to T^*M$ such that away from resets, the curve follows the Hamiltonian flow
    \begin{equation}\label{eq:hmp_continuous}
        \dot{\bar{x}} = \frac{\partial H}{\partial p}, \quad \dot{\bar{p}} = -\frac{\partial H}{\partial x}, \quad \bar{x}\not\in\mathcal{S}.
    \end{equation}
    While at resets, the curve jumps according to
    \begin{equation}\label{eq:hmp_discrete}
        \left. \begin{aligned}
            \bar{x}^+ &= \Delta(\bar{x}^-) \\
            \bar{p}^+\circ\Delta_* - \bar{p}^- &\in \Ann(TS) \\
            H^+ - H^- &= 0
        \end{aligned}\right\}
        \quad \bar{x}^- \in \mathcal{S}.
    \end{equation}
    Finally, the curve is subject to the terminal conditions
    \begin{equation}\label{eq:hmp_terminal}
        \bar{x}(T) \in N, \quad i^*\bar{p}(T) = d\varphi_{\bar{x}(T)},
    \end{equation}
    where $i:N\hookrightarrow M$ is the inclusion.
\end{theorem}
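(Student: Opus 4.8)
The plan is to deduce the statement from the classical Pontryagin maximum principle applied on each maximal interval of continuous motion, with the interface relations \eqref{eq:hmp_discrete} extracted from a first-variation analysis at the guard. The hypotheses are tailored to this reduction: because there are finitely many resets and the reset times are uniformly separated, every sufficiently small admissible perturbation of $\bar u$ yields a trajectory with the same ordered list of guard crossings, and the perturbed crossing times depend in a one-sided-differentiable way on the perturbation parameters. Write $t_0 < t_1 < \dots < t_k < t_{k+1} = T$ for the nominal reset times, so that $\bar x$ is a concatenation of $k+1$ smooth arcs, and run the analysis arc by arc.

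First I would pass to the cost-augmented system on $M \times \mathbb{R}$ by adjoining $\dot{x}^0 = \ell(x,u)$ and extending $\mathcal{S}$ and $\Delta$ trivially in the $x^0$ factor, so that the objective is the terminal value $x^0(T) + \varphi(x(T))$ and every first variation of the running cost appears as a state variation. On each arc one runs the standard needle-variation construction: a needle at a Lebesgue point of $\bar u$ in the $j$-th arc, transported by the linearized flow through the remaining arcs, produces a variation of the terminal augmented state; the convex hull of all such directions, over all arcs, needle locations and admissible needle values, together with the directions from perturbing $t_0$ and $T$, is a convex cone $\mathcal{K} \subset T_{\bar x(T)}(M\times\mathbb{R})$. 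The only new ingredient is the transport of a variation across a reset. If the perturbed trajectory meets $\mathcal{S}$ at time $\tau + \delta\tau$ with state variation $v$ measured at the nominal time $\tau$, then admissibility forces $v + f^-\delta\tau \in T_{\bar x^-}\mathcal{S}$, and flowing the post-reset piece back to time $\tau$ gives the outgoing variation $w = \Delta_*(v + f^-\delta\tau) - f^+\delta\tau$, where $f^\mp$ are the vector-field values just before and after the reset. Writing $u := v + f^-\delta\tau$, the pair $(u,\delta\tau)$ ranges freely over $T_{\bar x^-}\mathcal{S} \times \mathbb{R}$, and the reset acts on variations by $(u,\delta\tau) \mapsto \Delta_* u - f^+\delta\tau$.

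Optimality of $(\bar x, \bar u)$ means no direction of $\mathcal{K}$ simultaneously preserves feasibility with respect to $N$ and strictly decreases $x^0(T)+\varphi(x(T))$; a separating-hyperplane argument then furnishes a nonzero covector $\bar p(T)$ that annihilates $T_{\bar x(T)}N$ up to the $d\varphi$ term — this is exactly the transversality condition \eqref{eq:hmp_terminal} — and is nonpositive on $\mathcal{K}$. Transporting $\bar p$ backward by the adjoint equation $\dot{\bar p} = -\partial H/\partial x$ (equivalently, by the condition that $\langle\bar p,\delta x\rangle$ is constant along the augmented linearized flow), the separation inequality restricted to the needle cone of each arc gives, by the usual argument, the pointwise maximality $\mathcal{H}(\bar x,\bar p;\bar u) = H(\bar x,\bar p)$, hence \eqref{eq:hmp_continuous}. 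At each reset, consistency of the transported pairing requires $\langle \bar p^-, u - f^-\delta\tau\rangle = \langle \bar p^+, \Delta_* u - f^+\delta\tau\rangle$ for all $(u,\delta\tau) \in T_{\bar x^-}\mathcal{S}\times\mathbb{R}$; comparing the $u$-linear part gives $\bar p^+\circ\Delta_* - \bar p^- \in \Ann(T\mathcal{S})$, and comparing the $\delta\tau$-linear part gives $\langle\bar p^-,f^-\rangle = \langle\bar p^+,f^+\rangle$, which (once the running-cost bookkeeping of the augmentation is unwound) is $H^+ - H^- = 0$. This is precisely \eqref{eq:hmp_discrete}.

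I expect the main obstacle to be the technical justification that $\mathcal{K}$ is a legitimate convex cone: one must show that a needle inserted in an early arc and then pushed through one or more resets — each of which shifts all later crossing times — still yields a well-defined, one-sided-differentiable dependence of the terminal state on the needle parameter, and that several such needles can be superposed with arbitrary convex weights despite those crossing-time shifts. Uniform separation of the reset times is exactly what keeps the crossing times from colliding under perturbation and keeps these maps smooth, while transversality of $\bar x$ to $\mathcal{S}$ at each crossing (built into the notion of a hybrid arc) is what makes $\delta\tau$ a well-defined linear functional of $v$; once these points are in place the algebra producing \eqref{eq:hmp_discrete} is routine and the continuous part is verbatim the classical proof. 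A shorter, essentially equivalent route is to treat each reset as an interior-point constraint $x(\tau^+) = \Delta(x(\tau^-))$, $x(\tau^-)\in\mathcal{S}$ with free $\tau$, and invoke the known multipoint maximum principle, as in \cite{DMITRUK2008964,pakniyat_2023}; its interface multipliers are \eqref{eq:hmp_discrete}. Note finally that neither the constant-rank nor the submersion hypothesis on $\Delta$ is used in proving Theorem~\ref{thm:HMP} itself — those enter only when one tries to solve \eqref{eq:hmp_discrete} for $\bar p^+$, which is the concern of the remainder of the paper.
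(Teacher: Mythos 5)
The paper does not prove Theorem~\ref{thm:HMP} at all --- it is quoted from the literature, with the citations \cite{DMITRUK2008964,pakniyat_2023} serving in place of a proof, so there is nothing in the paper to compare your argument against line by line. Your needle-variation sketch (augment the cost, build the variational cone arc by arc, transport variations across the guard via $(u,\delta\tau)\mapsto \Delta_* u - f^+\delta\tau$, and read off \eqref{eq:hmp_discrete} from invariance of the pairing) is a correct outline of exactly the argument those references carry out, and your closing reduction to an interior-point/multipoint constraint is precisely the route the paper implicitly relies on; your observation that the submersion hypothesis plays no role in the theorem itself, only in solving \eqref{eq:hmp_discrete}, is also accurate.
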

The assumptions that the number of resets be finite and that the resets are separated are vital as there exist hybrid systems whose optimal trajectory is Zeno and does not satisfy the HMP, cf. \cite{zeno_hOC}.
\section{Systems with Submersive Resets}
A general technique to determine optimal trajectories is to integrate the hybrid trajectory, \eqref{eq:hmp_continuous} and \eqref{eq:hmp_discrete}, and determine the initial co-state, $p(t_0) \in T_{x_0}^*M$, such that the terminal condition \eqref{eq:hmp_terminal} is satisfied. This approach is challenging from both the numerical and theoretical standpoints. Numerically, it requires solving a highly discontinuous boundary value problem, and traditional techniques like the shooting method fall short, cf. \cite{hyrbidshooting}. Theoretically, it is not guaranteed that \eqref{eq:hmp_discrete} admits a unique solution. If a finite number of solutions exist, a search can be performed. However, in the case where $\Delta$ has rank $r < \dim\mathcal{S}$, $\Delta_*$ is a degenerate map and it is expected that either zero or infinitely many solutions should exist. 
We focus on the consistency conditions for the over-determined system \eqref{eq:hmp_discrete} to assist with existence.

Consider the linear system of equations $Ax = b$ where $x \in \mathbb{R}^n$ is the unknown, $b \in \mathbb{R}^m$, and $A$ is an $m \times n$ matrix with $m<n$. There exist no solutions for $x$ if $b\not\in \mathrm{image}(A)$ and infinitely many if $b \in \mathrm{image}(A)$. As solvability of this system implicitly constrains the vector $b$, solvability of \eqref{eq:hmp_discrete} implicitly constrains $p^-$ - the co-state immediately before reset. These constraints are
\begin{equation}\label{eq:consistency}
    p^-(v) = 0, \quad \forall v \in T\mathcal{S} : \Delta_*v = 0.
\end{equation}
A geometric interpretation of these consistency constraints is  given in the following proposition.
\begin{proposition}\label{prop:1}
    Suppose that $\bar{x} \colon [t_0,T]\to M$ is an optimal  trajectory, and for some $t_0 < t^* < T$, $\bar{x}(t^*)\in\mathcal{S}$. Then $y \coloneqq \bar{x}|_{[t_0,t^*]}$ is a minimum to
    \begin{equation*}
        \int_{t_0}^{t^*} \, \ell\left( y(s), u(s) \right) \, ds,
    \end{equation*}
    subject to the boundary conditions
    \begin{equation*}
        y(t_0) = \bar{x}(t_0), \quad y(t^*)\in N_{\bar{x}}\coloneqq\Delta^{-1}\left( \Delta(\bar{x}(t^*))\right).
    \end{equation*}
\end{proposition}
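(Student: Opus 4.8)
The plan is to argue by contradiction using the principle of optimality (a ``cut-and-paste'' argument). The geometric fact that makes the proposition true is that every point of the fiber $N_{\bar x}=\Delta^{-1}(\Delta(\bar x(t^*)))$ is mapped by $\Delta$ to one and the same post-reset state $\bar x^+:=\Delta(\bar x(t^*))$. Consequently the cost accumulated on $[t^*,T]$ along $\bar x$, namely $J^+:=\int_{t^*}^{T}\ell(\bar x(s),\bar u(s))\,ds+\varphi(\bar x(T))$, depends only on the post-reset state; it is insensitive to \emph{which} preimage in $N_{\bar x}$ the trajectory occupies just before $t^*$. Hence the pre-reset cost $J^-:=\int_{t_0}^{t^*}\ell(\bar x(s),\bar u(s))\,ds$ can be optimized independently, which is exactly the assertion.

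Carrying this out: write $\mathcal{J}(\bar u;t_0,x_0)=J^-+J^+$. Suppose, toward a contradiction, that $y=\bar x|_{[t_0,t^*]}$ is not a minimizer of the stated subproblem, so there is an admissible arc $(\tilde y,\tilde u)$ on $[t_0,t^*]$ with $\tilde y(t_0)=\bar x(t_0)$, $\tilde y(t^*)\in N_{\bar x}$, and $\tilde J^-:=\int_{t_0}^{t^*}\ell(\tilde y(s),\tilde u(s))\,ds<J^-$. Since $\Delta$ is defined on $\mathcal{S}$ we have $N_{\bar x}\subset\mathcal{S}$, so $\tilde y$ triggers a reset at $t^*$ and lands at $\Delta(\tilde y(t^*))=\Delta(\bar x(t^*))=\bar x^+$. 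Splice the two arcs: let $\tilde x$ follow $(\tilde y,\tilde u)$ on $[t_0,t^*]$, reset at $t^*$, and follow $(\bar x,\bar u)$ on $[t^*,T]$; this is well posed precisely because the post-reset state agrees with $\bar x^+$, so the tail of $\tilde x$ coincides with $\bar x|_{[t^*,T]}$ and contributes cost $J^+$. Then $\tilde x$ is feasible ($\tilde x(t_0)=x_0$, $\tilde x(T)=\bar x(T)\in N$, control piecewise smooth), while $\mathcal{J}(\tilde u;t_0,x_0)=\tilde J^-+J^+<J^-+J^+=\mathcal{J}(\bar u;t_0,x_0)$, contradicting the optimality of $\bar x$.

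The step needing the most care is checking that the spliced competitor $\tilde x$ still meets the standing hypotheses of Theorem~\ref{thm:HMP} — finitely many, uniformly separated resets — so that the contradiction is legitimate; the reset count is clearly finite (those of $\tilde y$, plus the one at $t^*$, plus those of $\bar x$ on $[t^*,T]$), and uniform separation can either be built into the admissible competitor class or handled by a short perturbation/limiting argument. It is also worth recording the degenerate case $r=\dim\mathcal{S}$, where $\Delta$ is a local diffeomorphism, $N_{\bar x}=\{\bar x(t^*)\}$ is a single point, and the proposition reduces to the usual principle of optimality for the continuous segment. Finally, the payoff: applying the classical maximum principle to the subproblem of Proposition~\ref{prop:1} — which carries only the terminal constraint $y(t^*)\in N_{\bar x}$ and no terminal cost — the transversality condition forces $\bar p^-$ to annihilate $T_{\bar x(t^*)}N_{\bar x}=\ker\Delta_{*}$, which is exactly the consistency constraint \eqref{eq:consistency}.
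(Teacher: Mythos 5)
Your argument is correct and rests on exactly the same observation as the paper's (one-sentence) proof: since the entire fiber $N_{\bar{x}}$ is mapped to the single point $\Delta(\bar{x}(t^*))$, the post-reset tail and its cost are unaffected by which preimage the arc reaches, so the pre-reset segment can be replaced by any cheaper competitor via the standard cut-and-paste contradiction. You have simply written out in full the splicing argument that the paper leaves implicit, including the (reasonable) bookkeeping about the competitor remaining an admissible hybrid arc.
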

\begin{proof}
    As this entire manifold, $N_{\bar{x}}$, gets mapped to the same point under $\Delta$, the location of impact on this manifold does not change the resulting trajectory.
\end{proof}
Intuitively, this proposition states that once the optimal trajectory is known, each individual continuous arc is optimal with known initial and terminal conditions determined by the optimal path. 
The (hybrid) maximum principle applied to this restricted control problem states that the co-state has the terminal condition
\begin{equation}\label{eq:momentum_impact}
    i^*p(t^*) = 0, \quad i \colon N_{\bar{x}} \hookrightarrow M.
\end{equation}
This condition is identical to the consistency condition \eqref{eq:consistency}.

When solving the general optimal control problem, the reset locations are not known a priori. The condition \eqref{eq:momentum_impact} at an unspecified point $x(t^*) \in \mathcal{S}$ can be formulated as
\begin{equation*}
    p(t^*) \in \mathrm{Ann}(T\mathcal{F}),
\end{equation*}
where $\mathcal{F}$ is $\mathcal{S}$ decomposed into the fibers of $\Delta$, i.e.
\begin{equation*}
    \mathcal{F} = \bigsqcup_{s \in \Delta(\mathcal{S})} \, \Delta^{-1}(s), \quad
    T\mathcal{F} = \left\{v \in T\mathcal{S} : \Delta_*v = 0 \right\},
\end{equation*} 
as shown in Fig. \ref{fig:Foliation Schematic}.
\begin{figure}
    \centering
    \includegraphics{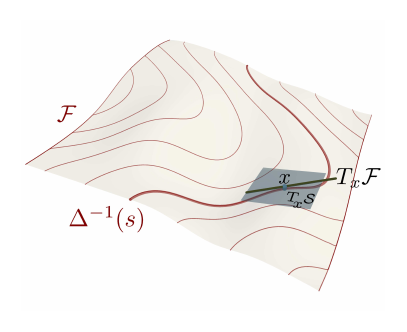}
    \caption{The foliation, $\mathcal{F}$, of the guard by the reset along with $T_x\mathcal{F}\subset T_x\mathcal{S}$.}
    \label{fig:Foliation Schematic}
\end{figure}
These consistency conditions modify the hybrid maximum principle in the following way.

\begin{theorem}\label{thm:2}
    Let $(x(t),p(t))$ be an optimal trajectory described in Theorem \ref{thm:HMP} with distinct reset times $\{t_k\}_{k=1}^N$. At these times,
    \begin{equation}\label{eq:consistency2}
        p^-(t_k)\in\Ann(T\mathcal{F}),
    \end{equation}
    and
    \begin{equation}\label{eq:subm_jump}
        p^+(t_k) 
        \in p^-(t_k) \circ \Delta_*^\dagger + \Ann\left(T\Delta(\mathcal{S})\right),
    \end{equation}
    subject to the constraint $H^+ = H^-$ where $\Delta_*^\dagger$ is a right pesudo-inverse of $\Delta_*$. 
\end{theorem}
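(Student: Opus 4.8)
The plan is to prove the two assertions in turn: \eqref{eq:consistency2} is a direct consequence of Proposition~\ref{prop:1}, and \eqref{eq:subm_jump} is then obtained by solving the Hamilton jump condition of Theorem~\ref{thm:HMP} under precisely the constraint that \eqref{eq:consistency2} provides. Throughout I work at a fixed reset time $t_k$ and set $x\coloneqq\bar x(t_k)\in\mathcal{S}$, $y\coloneqq\Delta(x)$.

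\emph{Step 1 (the consistency condition).} Apply Proposition~\ref{prop:1} with $t^*=t_k$ to the truncated hybrid arc $(\bar x,\bar u)|_{[t_0,t_k]}$. Since $\Delta$ collapses the fiber $N_{\bar x}\coloneqq\Delta^{-1}(y)$ to a single point, replacing $x$ by any other point of $N_{\bar x}$ leaves the post-reset portion of the trajectory, and hence its contribution to \eqref{eq:minimize}, unchanged; therefore this truncated arc minimizes $\int_{t_0}^{t_k}\ell$ over hybrid arcs from $\bar x(t_0)$ with free terminal point on $N_{\bar x}$ and \emph{no} terminal cost. Applying the (hybrid) maximum principle to this truncated problem, the terminal transversality condition is the homogeneous one $i^*\bar p^-(t_k)=0$ with $i\colon N_{\bar x}\hookrightarrow M$, i.e.\ \eqref{eq:momentum_impact}. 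By the submersion theorem, $T_x N_{\bar x}=\ker\Delta_*=T\mathcal{F}$, so this is exactly $\bar p^-(t_k)\in\Ann(T\mathcal{F})$.

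\emph{Step 2 (the jump formula).} The linear map $\Delta_*\colon T_x\mathcal{S}\to T_yM$ has kernel $T_x\mathcal{F}$ and, since $\Delta$ is a submersion onto its image, image $T_y\Delta(\mathcal{S})$. The Hamilton jump condition $\bar p^+\circ\Delta_*-\bar p^-\in\Ann(T\mathcal{S})$ says that $\bar p^+\circ\Delta_*$ and $\bar p^-$ restrict to the same covector on $T_x\mathcal{S}$; such a $\bar p^+$ exists iff $\bar p^-|_{T_x\mathcal{S}}$ vanishes on $\ker\Delta_*=T\mathcal{F}$, which is precisely Step~1 — so the consistency condition is at the same time the solvability condition for the jump. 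To describe the solution set, let $\Delta_*^\dagger$ be a right pseudo-inverse of $\Delta_*$ (so $\Delta_*\Delta_*^\dagger\Delta_*=\Delta_*$, whence $\Id-\Delta_*^\dagger\Delta_*$ maps $T_x\mathcal{S}$ into $\ker\Delta_*=T\mathcal{F}$). Then $\bar p^-\circ\Delta_*^\dagger$ is one admissible $\bar p^+$, since on $T_x\mathcal{S}$ one computes $(\bar p^-\circ\Delta_*^\dagger)\circ\Delta_*=\bar p^-\circ(\Delta_*^\dagger\Delta_*)=\bar p^-$, using that $\bar p^-$ annihilates $T\mathcal{F}$ by \eqref{eq:consistency2}. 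Finally, any two admissible $\bar p^+$ differ by a covector killing $\mathrm{image}(\Delta_*)=T\Delta(\mathcal{S})$, and conversely adding any element of $\Ann(T\Delta(\mathcal{S}))$ preserves admissibility; hence the solution set is $\bar p^-\circ\Delta_*^\dagger+\Ann(T\Delta(\mathcal{S}))$, which is \eqref{eq:subm_jump}. The scalar condition $H^+=H^-$ is carried over verbatim from Theorem~\ref{thm:HMP} and selects within this affine family.

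The step I expect to be the main obstacle is the reduction in Step~1 — making fully rigorous that the ``effective'' terminal cost of the truncated problem is genuinely constant on the entire fiber $N_{\bar x}$ (so the transversality condition is homogeneous, rather than the differential of some nonconstant cost-to-go), together with the admissibility bookkeeping needed to splice a competitor arc on $[t_0,t_k]$ to $\bar x$ on $[t_k,T]$ while keeping the reset times finite and separated. Once that is in place, the remainder is linear algebra, modulo fixing the conventions that identify $\Ann(T\mathcal{S})\subset T^*M|_\mathcal{S}$ with the kernel of the restriction $T^*M|_\mathcal{S}\to T^*\mathcal{S}$ and that pin down the domain and codomain of the right pseudo-inverse $\Delta_*^\dagger$.
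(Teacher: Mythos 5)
Your proposal is correct and follows essentially the same route as the paper's (much terser) proof: \eqref{eq:consistency2} via Proposition~\ref{prop:1} and the transversality condition on the fiber $N_{\bar x}$, and \eqref{eq:subm_jump} via the linear algebra of the right pseudo-inverse, with $\ker\Delta_*=T\mathcal{F}$ giving solvability and $\Ann(T\Delta(\mathcal{S}))$ the ambiguity. Your Step 2 simply spells out the details the paper compresses into one sentence.
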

\begin{proof}
    Equation \eqref{eq:consistency2} follows from Proposition \ref{prop:1}. Equation \eqref{eq:subm_jump} comes from the definition of $\Delta_*^\dagger$ along with the fact that $\forall p \in \Ann(TS) \implies p\circ \Delta_*^\dagger \in \Ann(T\Delta(S))$.
\end{proof}
Although the consistency conditions allow for solutions to \eqref{eq:subm_jump} to generally exist, such solutions are far from unique. Let the set-valued map for all possible solutions to \eqref{eq:subm_jump} be
\begin{equation*}
    \tilde{\Delta} \colon \mathrm{Ann}(T\mathcal{F}) \rightrightarrows T^*M.
\end{equation*}
Assuming sufficient regularity, the set of all possible solutions forms a manifold of known dimension.
\begin{proposition}\label{prop:dim_jump}
    Let $(x,p)\in\Ann(T\mathcal{F})$ and $E:= H(x,p)$. Consider the restricted Hamiltonian function,
    \begin{align*}
        h 
        \colon & \Ann\left( T_{\Delta(x)}\Delta(\mathcal{S})\right) \to\mathbb{R}, \\
        &  \mu \mapsto H\big(\Delta(x), p\circ\Delta_*^\dagger + \mu\big).
    \end{align*}
    If $E$ is a regular value of $h$, then $\tilde{\Delta}(x,p)$ is a manifold of dimension
    \begin{equation*}
        \dim \tilde{\Delta}(x,p) = \dim M - \rank\Delta - 1.
    \end{equation*}
\end{proposition}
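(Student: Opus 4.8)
The plan is to exhibit $\tilde{\Delta}(x,p)$ as a regular level set of the function $h$, so that the stated dimension follows at once from the regular value theorem. First I would note that the affine map
\begin{equation*}
  \Phi \colon \Ann(T_{\Delta(x)}\Delta(\mathcal{S})) \to T^*_{\Delta(x)}M, \qquad
  \mu \mapsto p\circ\Delta_*^\dagger + \mu,
\end{equation*}
is injective (its linear part is the inclusion of a linear subspace of $T^*_{\Delta(x)}M$) and has image exactly the affine subspace on the right-hand side of \eqref{eq:subm_jump}. Hence $\Phi$ is a diffeomorphism onto its image, and by the definition of $\tilde{\Delta}$ --- those $p^+$ solving \eqref{eq:subm_jump} and satisfying $H^+ = H^-$, together with $H^- = H(x,p) = E$ --- we have $\tilde{\Delta}(x,p) = \Phi(h^{-1}(E))$. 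It is worth recording first that this set is well defined: any two right pseudo-inverses of $\Delta_*$ differ by a map with image in $\ker\Delta_* = T_x\mathcal{F}$, on which $p$ vanishes since $(x,p)\in\Ann(T\mathcal{F})$, while any two extensions of $p\circ\Delta_*^\dagger$ to a covector in $T^*_{\Delta(x)}M$ differ by an element of $\Ann(T_{\Delta(x)}\Delta(\mathcal{S}))$; in either case the coset $\mathrm{image}(\Phi)$ is unchanged, so the construction of $\tilde{\Delta}$ does not depend on these choices.

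Next I would carry out the dimension count of the domain of $h$. Since $\Delta$ is a submersion onto its image with constant rank $r = \rank\Delta$, the image $\Delta(\mathcal{S})$ is an embedded submanifold of $M$ of dimension $r$, so
\begin{equation*}
  \dim \Ann(T_{\Delta(x)}\Delta(\mathcal{S})) = \dim M - r .
\end{equation*}
Applying the regular value theorem to $h$ (cf.\ \cite{lee}) --- permissible precisely because $E$ is assumed to be a regular value --- shows that $h^{-1}(E)$ is an embedded submanifold of $\Ann(T_{\Delta(x)}\Delta(\mathcal{S}))$ of codimension $1$, hence of dimension $\dim M - r - 1$. Transporting through the diffeomorphism $\Phi$ then yields that $\tilde{\Delta}(x,p)$ is a manifold of dimension $\dim M - \rank\Delta - 1$, as claimed.

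The one place that deserves care --- everything else being linear algebra on a single cotangent fibre plus one invocation of the regular value theorem --- is reading the clause ``subject to $H^+ = H^-$'' correctly as intersecting the affine fibre $\mathrm{image}(\Phi)$ with the \emph{single} hypersurface $\{\,H(\Delta(x),\cdot) = E\,\}$, so that the cut drops dimension by exactly one, together with confirming that $h^{-1}(E)$ is nonempty so the dimension statement is not vacuous. The latter is exactly what the consistency condition \eqref{eq:consistency2} of Theorem \ref{thm:2} supplies along a genuine optimal trajectory. I do not anticipate any analytic obstacle beyond this bookkeeping.
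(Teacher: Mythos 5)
Your proposal is correct and follows the same route as the paper: the paper's proof is a one-line appeal to the implicit function theorem, and your argument is precisely that appeal carried out in detail (identifying $\tilde{\Delta}(x,p)$ with the regular level set $h^{-1}(E)$ transported by the affine injection $\Phi$, then counting $\dim\Ann(T_{\Delta(x)}\Delta(\mathcal{S})) - 1 = \dim M - \rank\Delta - 1$). Your added remarks on well-definedness under the choice of pseudo-inverse and on nonemptiness via the consistency condition are sensible bookkeeping the paper leaves implicit.
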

\begin{proof}
    his follows directly from the implicit function theorem.
\end{proof}

\subsection{Uniqueness of the Co-state Jump}
In the case where $\Delta$ does not have full rank, the dimension of the solution manifold is greater than 0; hence solutions to the Hamiltonian jump condition are not unique. There exist infinitely many possible choices $p^+ \in \tilde{\Delta}(x^-, p^-)$. We now focus on how to single out a solution.

Let $\varphi_t\colon T^*M\to T^*M$ be the Hamiltonian flow from \eqref{eq:hmp_continuous}.
As the consistency conditions only exist at resets, consider the return map to $\mathcal{S}$, 
\begin{gather*}
    \tau \colon T^*M\to\mathbb{R}, \\
    \tau(x,p) = \min \left\{ t \geq 0 : \pi_M\left( \varphi_{t}(x,p)\right) \in \mathcal{S}\right\},
\end{gather*}
where $\pi_M:T^*M\to M$ is the canonical projection and $\tau(x,p) = \infty$ if no return exists.
For each $x\in M$, let $\Xi_x \subset T^*_xM$ be the co-states that satisfy the consistency conditions at the next reset as depicted in Fig. \ref{fig:Xi sets}, i.e.
\begin{equation*}
    \Xi_x = \left\{p \in T_x^*M : \varphi_{\tau(x,p)}(x,p) \in \mathrm{Ann}(T\mathcal{F})\right\}.
\end{equation*}
Although it is not clear how these sets look like, we do know that if $x\in \mathcal{S}$, then $\tau = 0 $ so  $\Xi_x = \Ann(T_x\mathcal{F})$ and has dimension $\rank\Delta +1$.

\begin{figure}%
    \centering
    \includegraphics[width = \linewidth]{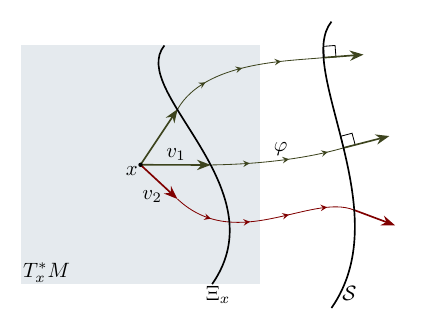}
    \caption{Depiction of $\Xi_x$ for $x \in M$, where $\varphi_{\tau(x,v_1)}(x,v_1)$ is perpendicular to $T_y\mathcal{S}$ for some $y$ whereas $\varphi_{\tau(x,v_2)}(x,v_2)$ is not.}
    \label{fig:Xi sets}
\end{figure}
Suppose that an optimal trajectory has a reset at the state $(x,p)$. Then the co-state post-reset must lie within both $\tilde{\Delta}(x,p)$ and $\Xi_{\Delta(x)}$, i.e.
\begin{equation*}
    p^+ \in \tilde{\Delta}(x,p) \cap \Xi_{\Delta(x)}.
\end{equation*}
Generically, this intersection contains a finite number of points, since $\Xi_{\Delta(x)}$ and $\Tilde{\Delta}(x, p)$ have complimentary dimension (due to Proposition \ref{prop:dim_jump} above)
\begin{equation*}
    \begin{split}
        \dim \tilde{\Delta}(x,p) & + \dim\Xi_{\Delta(x)} \\
        & = \dim M - \rank\Delta - 1 + \rank\Delta + 1 \\
        & = \dim M.
    \end{split}
\end{equation*}
The procedure to select the correct reset is shown in greater detail in Algorithm 1.  

\begin{algorithm}\label{alg:alg1}
    \caption{Procedure to select the correct reset at time $t$ assuming that $x(t) \in S$. Takes in the Hamiltonian $H$, the current state and costate $x(t)$ and $p(t)$, the reset map $\Delta$ and the flow $\varphi_t$. It outputs the co-state after impact $p^+$. $\pi_{T^*M}$ denotes the projection onto the cotangent component, while $\pi_M$ denotes the projection onto the manifold.  }
    \begin{algorithmic}
        \State $x_0, p_0 \gets x(t), p(t)$
        \State  $D_1 \gets \Delta_*(x_0, p_0) $
        \State $D_1^\dagger \gets D_1^T(D_1^TD_1)^{-1}$
        \Function{Reset}{$\mu$}
            \State output(1) = $H(p(t)) - H(p(t)D_1^\dagger + \mu)$ 
            \State \textbf{Solve} $\pi_M(\varphi_{t^*}(\Delta(x_0), p_0D^\dagger_1 + \mu)) \in  S$ for $t^* >0 $
            \State $p(t^*) \gets \pi_{T^*M}\left(\varphi_{t^*}(x_0, p_0)\right)$
            \State $D_2 = \Delta_*(x(t^*), p(t^*))$
            \State output(2) = $D_2^T (D_2^TD_2)^{-1}D_2p(t^*) - p(t^*)$
            \State output(3) = $D_1^\dagger D_1\mu- \mu$
        \EndFunction
        \State \textbf{Solve}: RESET$(\mu) = 0 $ 
        \State $p^+ = p_0D^\dagger_1 + \mu$
    \end{algorithmic}
\end{algorithm}
\vspace{-0.2cm}
The resulting optimal trajectory follows the hybrid dynamics: 
\begin{equation*}
    \begin{cases}
        \dot{x} = \frac{\partial H}{\partial p}, \quad \dot{p} = -\frac{\partial H}{\partial x}, & x\not\in \mathcal{S} \\[2ex]
        \left. \begin{aligned}
            x^+ &= \Delta(x^-) \\
            p^+ &\in \tilde{\Delta}(x^-,p^-)\cap \Xi_{x^+}
        \end{aligned}\right\}
        & x^- \in \mathcal{S},
    \end{cases}
\end{equation*}
subject to the boundary conditions
\begin{align*}
    x(0) = x_0, & \qquad p(0) \in \Xi_{x_0}, \\
    x(T) \in N, & \quad i^*p(T) = d\varphi_{x(T)}.
\end{align*}

\begin{figure*}[!ht]
    \centering
    \includegraphics[width = 0.9\textwidth]{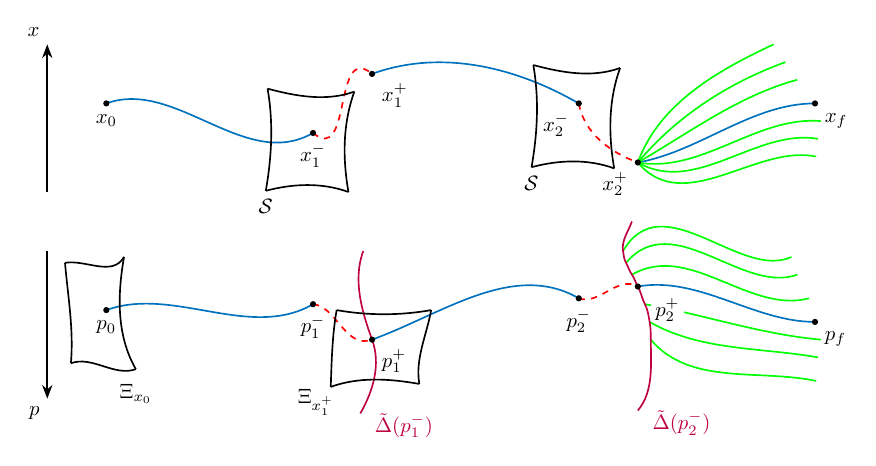}
    \caption{
    A schematic for the optimal control problem with two resets. The initial co-state must be admissible, $p_0\in \Xi_{x_0}$. Following the first reset, the new co-state must be contained in $\Xi_{x_1^+}\cap \tilde{\Delta}(p_1^-)$. After the second (and final) reset, the co-state no longer needs to be admissible. Rather, it needs to be chosen such that the terminal condition is satisfied. }
    \label{fig:two_bounces}
\end{figure*}

Unfortunately, while there are $n$ terminal conditions that need satisfied, there are only $\rank\Delta+1$ choices in the initial conditions. To overcome this apparent over-determined system, notice that the consistency conditions are not needed during the final reset. As such, during the last jump, we are free to choose any $p^+\in\tilde{\Delta}(x^-,p^-)$ which resolves this over-determination. A schematic of this procedure with two resets is depicted in Fig. \ref{fig:two_bounces}.

\section{Bouncing Ball with an ``Internal Variable''}
To demonstrate the above analysis, we study the example of a controlled bouncing ball with an ``internal state.'' The ball's position and velocity are $x$ and $y$, respectively while the internal variable is $z$.
This variable is independent of the motion of the ball, and can be directly controlled.  Whenever an impact happens, the internal variable is set to a constant value of $z^+ = 1$. In this case, the reset map has rank $2<3$. 
The control problem is to find
\begin{equation*}
    \min_{u,v} \, \frac{1}{2}\int_0^5 \, \left(u^2 + v^2 + z^2\right) \, dt,
\end{equation*}
subject to the continuous dynamics
\begin{equation*}
    \left. \begin{array}{l}
    \dot{x} = y \\ \dot{y} = -1 + u \\ \dot{z} = v
    \end{array} \right\} \quad x>0,
\end{equation*}
along with the reset
\begin{equation*}
    \Delta(x,y,z) = (x, -y, 1), \quad \begin{array}{c} x = 0, \\ y < 0, \end{array}
\end{equation*}
and the terminal conditions
\begin{equation*}
    x(5) = 1, \quad y(5) = 0, \quad z(5) = 0.
\end{equation*}

The control and optimized Hamiltonians are
\begin{equation*}
    \mathcal{H} = \frac{1}{2}\left(u^2+v^2+z^2\right) + p_xy + p_y(u-1) + p_zv,
\end{equation*}
\begin{equation*}
    H = -\frac{1}{2}p_y^2 - \frac{1}{2}p_z^2 + \frac{1}{2}z^2 + p_xy - p_y,
\end{equation*}
where the optimal controls are
\begin{equation*}
        u^* = -p_y, \quad
        v^* = -p_z.
\end{equation*}
The resulting equations of motion are given by
\begin{equation}\label{eq:full_system}
    \begin{array}{ll}
        \dot{x} = y, & \dot{p}_x = 0,\\
        \dot{y} = -1 - p_y, \qquad & \dot{p}_y = -p_x,\\
        \dot{z} = -p_z, & \dot{p}_z = -z.\\
    \end{array}
\end{equation}
The condition $p^+\circ\Delta_* - p^-\in\Ann(T\mathcal{S})$ translates to
\begin{equation*}
    \begin{bmatrix}
        p_x^+ & p_y^+ & p_z^+ 
    \end{bmatrix}\begin{bmatrix}
        1 & 0 & 0 \\ 0 & -1 & 0 \\ 0 & 0 & 0
    \end{bmatrix} = \begin{bmatrix}
        p_x^- & p_y^- & p_z^-
    \end{bmatrix} + \begin{bmatrix}
        \varepsilon & 0 & 0
    \end{bmatrix}.
\end{equation*}
In order for the above system to be solvable, we need $p_z^-=0$ (as predicted in Theorem \ref{thm:2}). Writing this out, we have
\begin{equation*}
    \begin{split}
        p_x^+ &= p_x^- + \varepsilon \\
        p_y^+ &= -p_y^- \\
        p_z^+ &= A
    \end{split}
\end{equation*}
where the unknowns $A$ and $\varepsilon$ are related through the energy conservation condition, which yields 
\begin{equation*}
    \begin{split}
        H^- &= -\frac{1}{2}(p_y^-)^2 + \frac{1}{2}z^2 + p_x^-y^- - p_y^- \\
        &= H^+ \\
        &= -\frac{1}{2}(p_y^-)^2 - \frac{1}{2}A^2 + \frac{1}{2} - (p_x^- + \varepsilon)y^- + p_y^-
    \end{split}
\end{equation*}
Solving for $\varepsilon$ yields:
\begin{equation*}
    p_x^- + \varepsilon = \frac{1}{y}\left( -\frac{1}{2}z^2 + \frac{1}{2} -p_x^-y^- + 2p_y^- - \frac{1}{2}A^2 \right)
\end{equation*}
Hence there are infinitely many lifted reset maps given by
\begin{equation}\label{eq:full_reset}
    \begin{split}
        p_x^+ &= \frac{1}{y}\left[ \frac{1}{2}\left( 1 - z^2 - A^2\right) -p_x^-y^- + 2p_y^-\right] \\
        p_y^+ &= -p_y^- \\
        p_z^+ &= A.
    \end{split}
\end{equation}
In this example, $\tilde{\Delta}$ is the curve parameterized by all possible values of $A$ in \eqref{eq:full_reset}.
In order to pinpoint which of these solutions is correct, we apply the consistency conditions \eqref{eq:consistency} at the \textit{next} reset, i.e. requiring that $p_z=0$ at the next instance when $x=0$.
\subsection{Computing the Admissible Co-States}
In this example, the sets $\Xi_{(x,y,z)}$ can be explicitly computed. For simplicity, assume that the current time is $t=t_0$ and the states are $z(t_0) = z_0$ and $p_z(t_0) = A$. The evolution of these two states are given by
\begin{equation*}
    \begin{split}
        z(t) &= \left(\frac{z_0-A}{2}\right)e^{\tau} + \left( \frac{z_0+A}{2}\right) e^{-\tau}, \\
        p_z(t) &= \left(\frac{A-z_0}{2}\right)e^\tau + \left(\frac{A+z_0}{2}\right)e^{-\tau},
    \end{split}
\end{equation*}
where $\tau = t - t_0$ is the time elapsed. 
The condition $p_z(t^*)=0$ can be solved to yield
\begin{equation}\label{eq:admissible_impact_time}
    t^* = \frac{1}{2}\ln\left(\frac{z_0+A}{z_0-A}\right) + t_0, \quad 0\leq A < z_0.
\end{equation}
Inverting this produces
\begin{equation*}
    A = \left( \frac{e^{2(t^*-t_0)}-1}{e^{2(t^*-t_0)}+1}\right) z_0.
\end{equation*}
The set of admissible co-states are then given by
\begin{equation*}
    \Xi_{(x_0, y_0, z_0)} = \left\{ 
    (p_x, p_y, p_z) : x\left( t^*\right) = 0
    \right\},
\end{equation*}
where $x(\,\cdot\,)$ is the $x$-trajectory of \eqref{eq:full_system} and $t^*$ is given by \eqref{eq:admissible_impact_time}. This set is a surface in $T_{(x_0,y_0,z_0)}\mathbb{R}^3 \cong \mathbb{R}^3$.

Fortunately, the entire continuous dynamics \eqref{eq:full_system} can be explicitly integrated and the condition $x(t^*)=0$ can be solved in closed-form. 
For an arbitrary initial state, we have:
\begin{equation*}
    \Xi_{(x,y,z)} : \begin{cases}
        \tau = \frac{1}{2}\ln\left( \frac{1+p_z}{1-p_z}\right), \\
        p_x = \frac{3}{\tau}(1+p_y) - \frac{6}{\tau^3}x - \frac{6}{\tau^2}y.
    \end{cases}
\end{equation*}
At the moment of a reset, $(0,y,z)\in\mathcal{S}$, the admissible co-states post-reset are given by
\begin{equation*}
    \Xi_{(0,-y,1)} : \begin{cases}
        \tau = \frac{1}{2}\ln\left(\frac{1+p_z}{1-p_z}\right), \\
        p_y = \left(\frac{1}{3}\tau\right)p_x - \left( \frac{2y}{\tau} + 1\right).
    \end{cases}
\end{equation*}

To find the correct jump condition at a reset, we need to determine the intersection between $\tilde\Delta(x,y,z;p_x,p_y,p_z)$ and $\Xi_{(0, -y, 1)}$.
The curve $\tilde\Delta$ is only over the range $0\leq A < 1$ as $z=1$ immediately post-reset. A figure of this intersection is shown in Fig. \ref{fig:find_A} where there does, indeed, exist a unique intersection.

\begin{figure}
    \centering
    \includegraphics[width=\columnwidth]{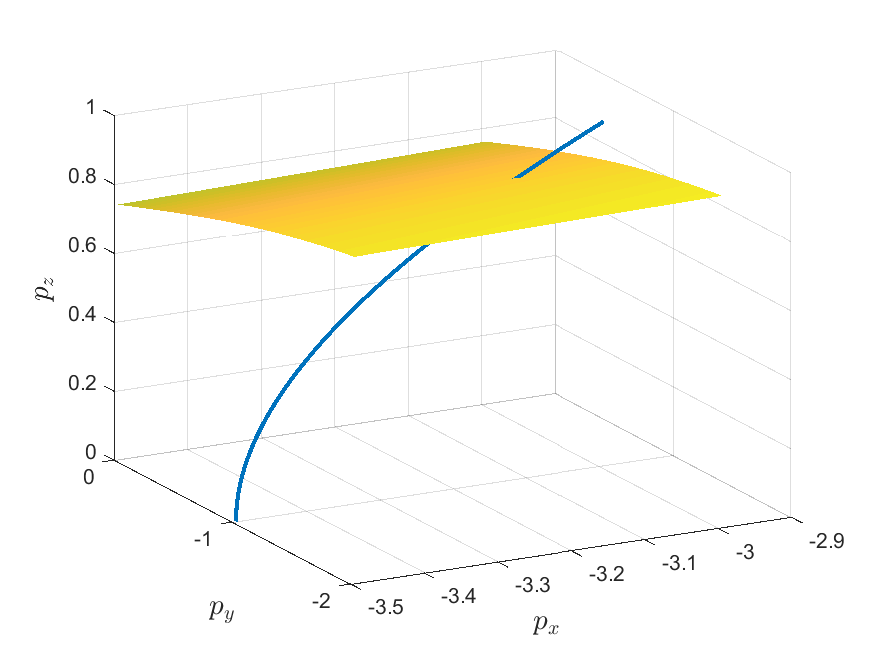}
    \caption{A plot of $\Xi_{(0,1,1)}$ (the yellow surface) along with $\tilde{\Delta}(0,-1,0.1,1,1,0)$ (the blue curve). In this example, there exists a single intersection point.}
    \label{fig:find_A}
\end{figure}

The intersection point in Fig. \ref{fig:find_A}, the lifted reset map at this point is
\begin{equation*}
    \left[
    \begin{array}{l}
        x^- = 0 \\
        y^- = -1 \\
        z^- = 0.1 \\
        p_x^- = 1 \\
        p_y^- = 1 \\
        p_z^- = 0
    \end{array}\right] \mapsto \left[ \begin{array}{c}
        0 \\ 1 \\ 1 \\ -3.1050 \\ -1.0000 \\ 0.8832
    \end{array}\right]
\end{equation*}
Using the initial condition $(x_0, y_0, z_0) = (0.5, 0, 1)$, optimal trajectories corresponding to 0, 1, 2, and 3 resets are depicted in Fig. \ref{fig:opt_traj} while the costs for each of these trajectories are displayed in Table \ref{tab:my_label}. Over these four arcs, the minimizing trajectory has two resets.
\begin{figure}
    \centering
    \includegraphics[width=\columnwidth]{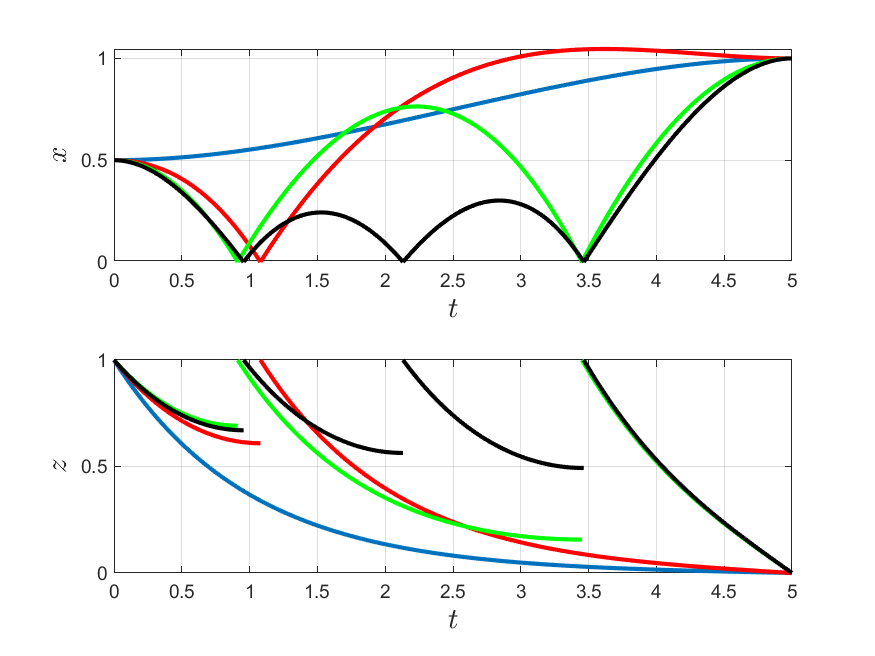}
    \caption{The optimal trajectories for 0, 1, 2, and 3 bounces (blue, red, green, and black respectively).}
    \label{fig:opt_traj}
\end{figure}
\begin{table}
    \centering
    \begin{tabular}{c|c}
       \# Bounces  &  Cost \\ \hline
        0 & 3.012 \\
        1 & 2.0809 \\
        2 & 1.4685 \\
        3 & 2.2376
    \end{tabular}
    \caption{The costs of the four trajectories in Fig. \ref{fig:opt_traj}.}
    \label{tab:my_label}
\end{table}
\section{Conclusion}
This work is just the first step towards dealing with the question of existence and uniqueness of solutions to the hybrid adjoint equation \eqref{eq:hmp_discrete}. We provide sufficient conditions to guarantee the existence of solutions, in the case when the reset map is a submersion. In particular, if  the consistency condition \eqref{eq:consistency} is satisfied, then there exists infinitely many solutions. In order to pinpoint the correct one we propagate the manifold of possible solutions forward, and select the ones that guarantee that \eqref{eq:consistency} is satisfied at the next impact time. This procedure produces the desired results, as exemplified in the case of a bouncing ball with an internal variable. However, in order to numerically compute this, we have to solve a hybrid boundary value problem, with an additional root finding problem at each impact time. While this might be doable if the dimension is low, if we increase the dimension, the complexity of the algorithm increases exponentially. As such, an optimization of this procedure, and an extension to higher dimensions is needed in the future. 

From a theoretical perspective, the reset map is not limited to being a submersion. There are hybrid systems where $\Delta$ does not have constant rank. Consider for example a bouncing ball where, at every impact the velocity gets cubed. The impact map $\Delta(v) = -v^3$ has a singularity at $v = 0$. We would like to extend this analysis to accommodate for singularities.

Lastly, the core of the analysis presented here lies in the treatment of a loss in dimension due to the reset map $\Delta$. We show that the problem is still solvable even though there is an apparent loss of information due to the non-injectivity of $\Delta$ and the existence of infinitely many solutions. In the future, we would like to transport the insights gained from this work to the case when this loss comes from the higher codimension of the impact surface instead.

\bibliography{ref.bib}             

\begin{thebibliography}{9}
\providecommand{\natexlab}[1]{#1}
\providecommand{\url}[1]{\texttt{#1}}
\providecommand{\urlprefix}{URL }
\expandafter\ifx\csname urlstyle\endcsname\relax
  \providecommand{\doi}[1]{doi:\discretionary{}{}{}#1}\else
  \providecommand{\doi}{doi:\discretionary{}{}{}\begingroup
  \urlstyle{rm}\Url}\fi

\bibitem[{Arndt et~al.(2010)Arndt, Fady, and Wolfgang}]{hyrbidshooting}
Arndt, H., Fady, A., and Wolfgang, M. (2010).
\newblock Hybrid shooting – a new optimization method for unstable dynamical
  systems.
\newblock \emph{IFAC Proceedings Volumes}, 43, 745--750.

\bibitem[{B{\"{o}}hme and Frank(2017)}]{Böhme2017}
B{\"{o}}hme, T.J. and Frank, B. (2017).
\newblock \emph{Hybrid Systems, Optimal Control and Hybrid Vehicles: Theory,
  Methods and Applications}.
\newblock Springer International Publishing, Cham.

\bibitem[{Bullo and Zefran(1998)}]{locomotion}
Bullo, F. and Zefran, M. (1998).
\newblock On modelling and locomotion of hybrid mechanical systems with
  impacts.
\newblock In \emph{Proceedings of the 37th IEEE Conference on Decision and
  Control}, volume~3, 2633--2638 vol.3.

\bibitem[{Clark and Oprea(2023)}]{zeno_hOC}
Clark, W. and Oprea, M. (2023).
\newblock Optimality of {Z}eno executions in hybrid systems.
\newblock In \emph{2023 American Control Conference (ACC)}, 3983--3988.

\bibitem[{Dmitruk and Kaganovich(2008)}]{DMITRUK2008964}
Dmitruk, A. and Kaganovich, A. (2008).
\newblock The hybrid maximum principle is a consequence of {P}ontryagin maximum
  principle.
\newblock \emph{Systems \& Control Letters}, 57(11), 964--970.

\bibitem[{Geobel et~al.(2009)Geobel, Sanfelice, and Teel}]{HDS_magazine}
Geobel, R., Sanfelice, R., and Teel, A. (2009).
\newblock Hybrid dynamical systems.
\newblock \emph{IEEE Control Systems Magazine}, 29(2), 28--93.

\bibitem[{Lee(2012)}]{lee}
Lee, J. (2012).
\newblock \emph{Introduction to Smooth Manifolds}.
\newblock Graduate Texts in Mathematics. Springer, New York, 2 edition.

\bibitem[{Pakniyat and Caines(2023)}]{pakniyat_2023}
Pakniyat, A. and Caines, P. (2023).
\newblock The minimum principle of hybrid optimal control theory.
\newblock \emph{Math. Control Signals Syst.}

\bibitem[{Spong and Bullo(2005)}]{passive_walking}
Spong, M. and Bullo, F. (2005).
\newblock Controlled symmetries and passive walking.
\newblock \emph{IEEE Transactions on Automatic Control}, 50(7), 1025--1031.

\end{thebibliography}

\end{document}